\newtheorem{theorem}{Theorem}
\newtheorem{remark}{Remark}
\newtheorem{conjecture}{Conjecture}
\newcommand{\abs}[1]{\left\lvert{#1}\right\rvert}
\newcommand{\floor}[1]{\left\lfloor{#1}\right\rfloor}
\newcommand{\ceil}[1]{\left\lceil{#1}\right\rceil}
\DeclareMathOperator{\ex}{ex}
\title{An Erd\H{o}s-Gallai type theorem for vertex colored graphs}
\author{
Nika Salia\thanks{Central European University, Budapest. email: \texttt{Nika\_Salia@phd.ceu.edu} } \and 
Casey Tompkins\thanks{Alfr\'ed R\'enyi Institute of Mathematics, Hungarian Academy of Sciences. email: \texttt{ctompkins496@gmail.com}} \and 
Oscar Zamora\thanks{Central European University, Budapest. email: \texttt{oscarz93@yahoo.es}} \thanks{Universidad de Costa Rica, San Jos\'e. }
}
\begin{document}
\maketitle
\begin{abstract}
While investigating odd-cycle free hypergraphs, Gy\H{o}ri and Lemons introduced a colored version of the classical theorem of Erd\H{o}s and Gallai on $P_k$-free graphs.  They proved that any graph $G$ with a proper vertex coloring and no path of length $2k+1$ with endpoints of different colors has at most $2kn$ edges.  We show that  Erd\H{o}s and Gallai's original sharp upper bound of $kn$ holds for their problem as well.  We also introduce a version of this problem for trees and present a generalization of the Erd\H{o}s-S\'os conjecture.
\end{abstract}

We denote by $P_\ell$ the path of length $\ell$ (that is, containing $\ell$ edges).  For a graph $G$, we denote by $E(G)$ and $V(G)$ the edge and vertex set of $G$, respectively.   We begin by recalling the theorems of Erd\H{o}s and Gallai about graphs without long paths and cycles.  
\begin{theorem}[Erd\H{o}s-Gallai \cite{erdHos1959maximal}] \label{EGpath}  Let $G$ be an $n$-vertex graph with no $P_\ell$, then 
\begin{displaymath}
\abs{E(G)} \le \frac{\ell-1}{2}n,
\end{displaymath}
and equality holds if and only if $\ell$ divides $n$ and $G$ is the disjoint union of $\frac{n}{\ell}$ cliques of size $\ell$.
\end{theorem}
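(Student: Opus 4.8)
The plan is to prove the bound $\abs{E(G)}\le\frac{\ell-1}{2}n$ by induction on $n$, splitting according to whether $G$ is connected. If $G$ is disconnected, write it as the disjoint union of its components and apply the inductive hypothesis to each; summing gives the bound at once, since it is linear in the vertex count. So assume $G$ is connected and fix a longest path $P=v_0v_1\cdots v_k$; as $G$ has no $P_\ell$ we have $k\le\ell-1$. If $V(P)=V(G)$ then $n=k+1$ and $\abs{E(G)}\le\binom{n}{2}=\frac{k}{2}n\le\frac{\ell-1}{2}n$. Otherwise $n\ge k+2$, which is the substantive case.

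In this case I claim $\deg(v_0)+\deg(v_k)\le k$. Maximality of $P$ forces all neighbors of $v_0$ and of $v_k$ to lie on $P$. If some index $i$ satisfied $v_0v_i\in E(G)$ and $v_{i-1}v_k\in E(G)$, then $v_0v_1\cdots v_{i-1}v_kv_{k-1}\cdots v_iv_0$ would be a cycle through every vertex of $P$; since $n\ge k+2$ there is a vertex outside it, and connectivity would then let us reroute through that vertex to get a path with more than $k$ edges, a contradiction. Hence, writing $A=\{i:v_0v_i\in E(G)\}$ and $B=\{j:v_jv_k\in E(G)\}$, the sets $\{i-1:i\in A\}$ and $B$ are disjoint subsets of $\{0,1,\dots,k-1\}$, so $\deg(v_0)+\deg(v_k)=\abs{A}+\abs{B}\le k$. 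Deleting $v_0$ and $v_k$ leaves a $P_\ell$-free graph, so by induction $\abs{E(G-v_0-v_k)}\le\frac{(\ell-1)(n-2)}{2}$, and therefore $\abs{E(G)}\le\frac{(\ell-1)(n-2)}{2}+k\le\frac{(\ell-1)(n-2)}{2}+(\ell-1)=\frac{\ell-1}{2}n$.

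For the equality statement I plan to chase these inequalities. If $G$ is disconnected and extremal, each component is extremal, hence a disjoint union of $K_\ell$'s by induction, so $G$ is too (and $\ell\mid n$). If $G$ is connected, extremal, and $V(P)=V(G)$, then $n=\ell$ and $G=K_\ell$. The case where $G$ is connected, extremal, and $n\ge k+2$ I expect to exclude outright: equality forces $k=\ell-1$, $\deg(v_0)+\deg(v_k)=\ell-1$, and $G-v_0-v_k$ extremal, hence a disjoint union of $K_\ell$'s. Then the $\ell-2$ internal vertices $v_1,\dots,v_{k-1}$, forming a path, all lie in a single such clique $K$, which therefore has exactly two further vertices $w_1,w_2$; since $v_0$ and $v_k$ attach only to $\{v_1,\dots,v_{k-1}\}$ while $v_{k-1}v_k\in E(G)$, the sequence $w_1w_2v_1v_2\cdots v_{k-1}v_k$ is a path with $\ell$ edges, contradicting $P_\ell$-freeness. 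Thus the disjoint unions of copies of $K_\ell$ are exactly the extremal graphs.

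The edge bound itself comes out smoothly from the longest-path argument; the step I expect to be the real obstacle is the equality analysis in the connected, non-spanning case — ruling out that gluing two low-degree vertices onto a union of cliques can stay on the extremal boundary. The resolution is the explicit $P_\ell$ exhibited above, and the care needed is in the index bookkeeping (that $\{i-1:i\in A\}$ and $B$ actually partition $\{0,\dots,k-1\}$ when equality holds, which forces $v_{k-1}v_k$ into exactly the right place) and in the observation that a connected subpath must lie inside one clique. The degenerate cases $\ell\le 2$ are handled directly.
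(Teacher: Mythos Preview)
Your proof is correct. The longest-path argument with the ``crossing'' count $\deg(v_0)+\deg(v_k)\le k$ is the standard direct route, and your equality analysis in the connected non-spanning case is sound: once $G-v_0-v_k$ is a disjoint union of $K_\ell$'s, the internal path $v_1\cdots v_{k-1}$ sits in one clique, the two extra clique vertices together with $v_1,\ldots,v_{k-1},v_k$ give the forbidden $P_\ell$, and the case is excluded. (One small remark: you silently use that $v_0v_k\notin E(G)$ when counting removed edges as $\deg(v_0)+\deg(v_k)$; this is already implied by your crossing argument, since $k\in A$ would force $k-1\in\{i-1:i\in A\}\cap B$.)

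The paper, however, does not give its own proof of this statement at all: Theorem~\ref{EGpath} is quoted as a classical result of Erd\H{o}s and Gallai, and the only comment is that it ``was deduced as a simple corollary of Theorem~\ref{EGcycle}'' (the cycle version). So the intended comparison is between your direct longest-path induction and the deduction from the cycle theorem. Your approach is self-contained and yields the equality characterization in the same breath; the paper's route is shorter if one is willing to take the (harder) cycle theorem as given, but then the equality case has to be extracted from the equality case of Theorem~\ref{EGcycle}. Either way, nothing is missing from your argument.
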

\begin{theorem}[Erd\H{o}s-Gallai \cite{erdHos1959maximal}]  \label{EGcycle} Let $G$ be an $n$-vertex graph with no $C_m$ for all $m \ge \ell$, then 
\begin{displaymath}
\abs{E(G)} \le \frac{(\ell-1)(n-1)}{2}.
\end{displaymath}
and equality holds if and only if $\ell-2$ divides $n-1$ and $G$ is the union of $\frac{n-1}{\ell-2}$ disjoint cliques of size $\ell-1$ sharing a fixed vertex.
\end{theorem}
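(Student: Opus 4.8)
The plan is to establish the bound by induction on the number of vertices $n$, using the block decomposition of $G$ to reduce to the $2$-connected case, which is then settled by a classical lemma relating circumference to minimum degree. Put $k:=\ell-1$, so the assertion is that $\abs{E(G)}\le\frac{k(n-1)}{2}$ whenever $G$ has $n$ vertices and no cycle of length $\ge\ell$, i.e.\ circumference at most $k$. If $n\le k$ there is nothing to prove, since $\abs{E(G)}\le\binom n2=\frac{n(n-1)}{2}\le\frac{k(n-1)}{2}$; so assume $n>k$ and that the statement holds for all graphs on fewer vertices.

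If $G$ is disconnected with components of orders $n_1,\dots,n_t$ ($t\ge 2$), each component still has circumference at most $k$, so by induction $\abs{E(G)}=\sum_i\abs{E(G_i)}\le\frac k2\sum_i(n_i-1)=\frac k2(n-t)<\frac{k(n-1)}{2}$. If $G$ is connected with a cut vertex $v$, write $G=G_1\cup G_2$ with $V(G_1)\cap V(G_2)=\{v\}$, $E(G_1)\cup E(G_2)=E(G)$, $\abs{V(G_i)}=n_i$ and $n_1+n_2=n+1$; each $G_i$ has circumference at most $k$, so induction gives $\abs{E(G)}=\abs{E(G_1)}+\abs{E(G_2)}\le\frac k2(n_1-1)+\frac k2(n_2-1)=\frac k2(n-1)$. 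Hence we may assume $G$ is $2$-connected.

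Now I would invoke Dirac's lemma: every $2$-connected graph $H$ contains a cycle of length at least $\min\{\abs{V(H)},\,2\delta(H)\}$, where $\delta$ denotes the minimum degree. Since $G$ is $2$-connected with circumference at most $k<n$, this forces $2\delta(G)\le k$, that is $\delta(G)\le k/2$. Pick a vertex $v$ with $\deg(v)=\delta(G)$; then $G-v$ has $n-1\ge k$ vertices and circumference at most $k$, so by induction $\abs{E(G)}=\abs{E(G-v)}+\delta(G)\le\frac{k(n-2)}{2}+\frac k2=\frac{k(n-1)}{2}$, which completes the proof of the inequality. The main obstacle is Dirac's lemma itself: the standard route takes a longest cycle $C$, observes that some vertex lies off $C$ when $\abs{C}<n$, uses $2$-connectivity (via a fan, or Menger's theorem) to join that vertex to two vertices of $C$ by an internally disjoint path, and then applies a rotation/bypass argument together with the degree hypothesis at the endpoints to produce a cycle longer than $C$, a contradiction.

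For the characterization of equality I would re-run the induction, recording when each inequality is tight. The disconnected case is never tight. In the cut-vertex case tightness forces both $G_1$ and $G_2$ to be extremal, and in the $2$-connected case one checks that equality is impossible once $n\ge\ell$: it would require $\delta(G)=k/2$ together with $G-v$ extremal, but then $2$-connectivity of $G$ forces $v$ to send edges into two distinct blocks of $G-v$, which already produces a cycle of length at least $\ell$. Unwinding the recursion shows that an extremal graph with $n\ge\ell$ must be connected with every block a clique $K_{\ell-1}$ (so in particular $\ell-2\mid n-1$, the number of blocks being $\frac{n-1}{\ell-2}$); the "book" of cliques sharing a single vertex described in the statement is the special case in which all the cut vertices coincide. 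One finishes by verifying directly that such graphs attain the bound and contain no cycle of length $\ge\ell$.
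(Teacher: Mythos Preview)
The paper does not prove this theorem at all: it is quoted from Erd\H{o}s and Gallai's original 1959 paper and used as a black box in the proof of Theorem~\ref{main2}. So there is no ``paper's own proof'' to compare against.

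That said, your argument for the inequality is the standard one and is correct. Reducing to the $2$-connected case via the block decomposition and then invoking Dirac's theorem (a $2$-connected graph has a cycle of length at least $\min\{n,2\delta\}$) to force $\delta\le(\ell-1)/2$, followed by deleting a minimum-degree vertex, is exactly how modern textbook proofs run.

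Your treatment of the equality case is more interesting than you perhaps realise: you correctly observe that the extremal graphs are precisely the connected graphs in which every block is a $K_{\ell-1}$, and that the ``book'' configuration in the statement (all cliques through one fixed vertex) is only a special case. In fact the ``only if'' direction as stated in the paper is too strong --- a path of $K_{\ell-1}$'s, each sharing a single vertex with the next, also attains the bound and has no cycle of length $\ge\ell$. Your sketch of why a $2$-connected $G$ with $n\ge\ell$ cannot be extremal is on the right track but would benefit from being written out: once $G-v$ has at least two end-blocks, $2$-connectivity of $G$ forces $v$ to have a non-cut neighbour in each, and routing a Hamiltonian path through one of those $K_{\ell-1}$ blocks already yields a cycle of length at least $\ell$ through $v$.
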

In fact, Theorem \ref{EGpath} was deduced as a simple corollary of Theorem \ref{EGcycle}.  In a more recent paper, Gy\H{o}ri and Lemons \cite{gyHori20123} investigated the extremal number of hypergraphs avoiding so-called Berge-cycles. To this end, they introduced a generalization of the theorem of Erd\H{o}s and Gallai about paths.   By a proper vertex coloring of a graph $G$, we mean a coloring of the vertices of $G$ such that no two adjacent vertices are the same color. Gy\H{o}ri and Lemons proved the following.

\begin{theorem}[Gy\H{o}ri-Lemons \ref{EGcycle}]
\label{main}
Let $G$ be an $n$-vertex graph with a proper vertex coloring such that $G$ contains no $P_{2k+1}$ with endpoints of different colors,  then
\begin{displaymath}
\abs{E(G)} \le2 k n.  
\end{displaymath}
\end{theorem}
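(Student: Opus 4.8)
\emph{Plan.} I would prove the contrapositive: if $|E(G)| > 2kn$ then $G$ contains a path of length $2k+1$ whose endpoints get distinct colors. The first move is the standard reduction to a dense subgraph. While the current graph has a vertex of degree at most $2k$, delete it; each deletion removes at most $2k$ edges, so deleting all $n$ vertices would destroy at most $2kn < |E(G)|$ edges --- impossible. Hence we are left with a non-empty $H \subseteq G$ with $\delta(H) \ge 2k+1$; the induced coloring is still proper, and a path of length $2k+1$ with differently colored ends in $H$ is one in $G$. So it suffices to prove: \emph{every properly vertex-colored graph $H$ with $\delta(H) \ge 2k+1$ contains a $P_{2k+1}$ whose endpoints have different colors.}

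\emph{Locating the path.} Let $P = v_0 v_1 \cdots v_\ell$ be a longest path in $H$. Every neighbor of $v_0$ lies on $P$, so $\ell \ge \deg(v_0) \ge 2k+1$. If $\ell = 2k+1$ then $N(v_0) = \{v_1, \dots, v_{2k+1}\}$, so $v_0 \sim v_{2k+1}$, and then $v_1 v_0 v_{2k+1} v_{2k} v_{2k-1} \cdots v_2$ is a path of length $2k+1$ whose endpoints $v_1, v_2$ are adjacent on $P$, hence colored differently, and we are done. So assume $\ell \ge 2k+2$. Now look at the sliding windows $v_i v_{i+1} \cdots v_{i+2k+1}$ for $0 \le i \le \ell - 2k - 1$: each is a path of length exactly $2k+1$, so if some window has endpoints of distinct colors we are done; thus we may assume $c(v_i) = c(v_{i+2k+1})$ for all such $i$, that is, the coloring of $P$ is $(2k+1)$-periodic (and one period, closed up by the edge $v_{2k}v_{2k+1}$, is a proper coloring of a $(2k+1)$-cycle).

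\emph{Exploiting chords at the endpoints.} Periodicity gives $c(v_{2k+1}) = c(v_0)$, so properness forces $v_0 \not\sim v_{2k+1}$, and more generally $v_0 \not\sim v_i$ whenever $i \equiv 0 \pmod{2k+1}$. Since $\deg(v_0) \ge 2k+1$ while $N(v_0) \subseteq \{v_1, \dots, v_\ell\} \setminus \{v_{2k+1}\}$, the vertex $v_0$ has a neighbor $v_j$ with $j \ge 2k+2$. The chord $v_0 v_j$ generates new length-$(2k+1)$ paths such as $v_{j-2k} v_{j-2k+1} \cdots v_j v_0$, and via Pósa rotations new longest paths such as $v_{j-1} v_{j-2} \cdots v_1 v_0 v_j v_{j+1} \cdots v_\ell$. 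Feeding each of these back into the sliding-window argument forces further color identifications $c(v_a) = c(v_b)$ between prescribed vertices; the aim is that, together with the edge constraint $c(x) \ne c(y)$ and the symmetric analysis at $v_\ell$, these cannot all hold. When instead every chord at $v_0$ and at $v_\ell$ has large index, $\{v_0, \dots, v_\ell\}$ carries a cycle-rich structure, and the same idea --- now comparing the periodicities around two overlapping cycles --- applies.

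\emph{The main obstacle.} The crux is this last step: converting the periodicity bookkeeping into a genuine contradiction. The difficulty is that a proper coloring of the $(2k+1)$-cycle arising from one period of $P$ need not use $2k+1$ distinct colors, so a vertex's position on $P$ cannot be recovered from its color; the contradiction has to be squeezed out of the interplay of properness with the abundance of chords that $\delta(H) \ge 2k+1$ forces (notably, every such chord skips all positions $\equiv 0 \pmod{2k+1}$). I would expect to get there by also choosing $P$ extremal for a secondary parameter --- for instance minimizing the index $j$ above, or maximizing the set of vertices reachable as the endpoint of a longest path via rotations --- so as to pin the endpoint neighborhoods into exactly the positions where the congruences collide. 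The case $k = 1$ already runs through cleanly along these lines, which is encouraging.
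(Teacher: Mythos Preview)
First, note that the paper does not prove this statement on its own: Theorem~\ref{main} is quoted from Gy\H{o}ri and Lemons, and the paper instead proves the sharper bound $|E(G)|\le kn$ (Theorem~\ref{main2}), which of course implies it. So any comparison is with that argument.

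Your proposal has a genuine gap, which you yourself flag. After the reduction to $\delta(H)\ge 2k+1$, the longest-path step, and the sliding-window periodicity $c(v_i)=c(v_{i+2k+1})$, you do not actually derive a contradiction. The obstacle you identify is real: periodicity only supplies \emph{equalities} of colors between congruent indices, never inequalities, so distinct residues mod $2k+1$ may share a color and one cannot read off a vertex's residue from its color. Each chord $v_0v_j$ you feed through the machinery produces one more equality of the form ``residue $r$ has the same color as residue $r'$''; to finish, one must show that the full system of such equalities, generated by the $\ge 2k+1$ chords at each endpoint (and at each rotated endpoint), is inconsistent with the edge-inequalities $c(v_i)\ne c(v_{i+1})$. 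That is the entire content of the proof, and the proposal leaves it as ``I would expect to get there'' via an unspecified secondary extremal choice. As written this is a plan, not a proof.

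For contrast, the paper's proof of the stronger Theorem~\ref{main2} takes a completely different route. It reduces only to $\delta(G)\ge k$, invokes the Erd\H{o}s--Gallai cycle theorem (Theorem~\ref{EGcycle}) to locate a cycle of length at least $2k+1$, takes the \emph{shortest} such cycle $C$, and case-splits on $|C|$. The contradictions there are obtained by exhibiting two $(2k+1)$-paths with a common start and \emph{adjacent} terminal vertices, which forces two neighbors to share a color; minimality of $|C|$ together with the degree bound controls where chords and outgoing edges can sit. No longest-path or rotation machinery appears.
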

We show that the factor of $2$ in Theorem \ref{main} is not needed and, thus, recover the original upper bound from the Erd\H{o}s-Gallai theorem.  We also determine which graphs achieve this upper bound. 
\begin{theorem}
\label{main2}
Let $G$ be an $n$-vertex graph with a proper vertex coloring such that $G$ contains no $P_{2k+1}$ with endpoints of different colors,  then
\begin{displaymath}
\abs{E(G)} \le k n, 
\end{displaymath}
and equality holds if and only if $2k+1$ divides $n$ and $G$ is the union of $\frac{n}{2k+1}$ disjoint cliques of size $2k+1$.
\end{theorem}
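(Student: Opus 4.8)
The plan is to induct on the number of vertices $n$, writing $\phi$ for the given proper coloring and calling a $P_{2k+1}$ \emph{bad} if its two endpoints receive different colors. If $n\le 2k+1$, then $\abs{E(G)}\le\binom{n}{2}\le kn$, with equality exactly when $n=2k+1$ and $G=K_{2k+1}$; so from now on assume $n\ge 2k+2$.

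There are two routine reductions. If $G$ is disconnected, one applies the inductive hypothesis to each component and sums the bounds, so that equality forces every component to be an extremal graph — hence, by induction, a copy of $K_{2k+1}$ — and in particular $2k+1\mid n$. If $G$ is connected but some vertex $v$ has $\deg(v)\le k$, one applies the inductive hypothesis to $G-v$ and obtains $\abs{E(G)}\le k(n-1)+k=kn$; moreover equality here would force $\deg(v)=k$ and $G-v$ to be a disjoint union of copies of $K_{2k+1}$, and then, choosing a clique $C$ of $G-v$ containing a neighbor $u$ of $v$, a Hamiltonian path of $C$ that ends at $u$, followed by the edge $uv$, is a $P_{2k+1}$ whose far endpoint can be chosen among the $2k$ vertices of $C\setminus\{u\}$ — these having distinct colors — to differ in color from $v$, giving a bad $P_{2k+1}$ and a contradiction. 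Thus the crux is the case where $G$ is connected and $\delta(G)\ge k+1$.

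In that case I would take a longest path $P=v_0v_1\cdots v_\ell$. Since $P$ is longest, all neighbors of $v_0$ and of $v_\ell$ lie on $P$, and the existence of an index $i$ with $v_0\sim v_i$ and $v_\ell\sim v_{i-1}$ would produce the cycle $v_0v_1\cdots v_{i-1}v_\ell v_{\ell-1}\cdots v_iv_0$ through all of $V(P)$, hence — by connectivity, as long as $V(P)\ne V(G)$ — a path longer than $P$. Consequently, unless $P$ spans $G$, the index set of the neighbors of $v_0$ and the (one-shifted) index set of the neighbors of $v_\ell$ are disjoint subsets of $\{0,\dots,\ell-1\}$, so $\ell\ge\deg(v_0)+\deg(v_\ell)\ge 2k+2$; and if $P$ does span $G$ then $\ell=n-1\ge 2k+1$. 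In the borderline subcase $\ell=2k+1$ (so $P$ spans $G$ and $n=2k+2$), for each neighbor $v_a$ of $v_0$ the rotated path $v_{a-1}v_{a-2}\cdots v_0v_av_{a+1}\cdots v_{2k+1}$ is again a $P_{2k+1}$, giving $\phi(v_{a-1})=\phi(v_{2k+1})=\phi(v_0)$; the indices $a-1$ so obtained form a subset of $\{0,\dots,2k\}$ of size $\deg(v_0)\ge k+1$, containing $0$ and containing no two consecutive integers (a neighbor of $v_0$ cannot have color $\phi(v_0)$), hence that subset is exactly $\{0,2,\dots,2k\}$. The mirror argument at $v_{2k+1}$ forces $\{1,3,\dots,2k+1\}$ to receive color $\phi(v_0)$ as well, so $v_0,\dots,v_{2k+1}$ are all of one color, which is impossible on the edge $v_0v_1$. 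Hence $\ell\ge 2k+2$.

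The case $\ell\ge 2k+2$ is, I expect, the main obstacle. Unlike in the uncolored Erd\H{o}s-Gallai problem, a connected graph with minimum degree at least $k+1$ and many vertices need not contain a bad $P_{2k+1}$ — a long cycle carrying a coloring periodic with period dividing $2k+1$ is one example — so the argument must be sharp enough to separate such near-extremal structures from $K_{2k+1}$, and isolating the precise equality cases is itself the subtlest point. My approach would be to combine the color-periodicity relation $\phi(v_i)=\phi(v_{i+2k+1})$ along $P$ (read off from its subpaths of length $2k+1$) with rotation arguments that pivot at $v_0$, at $v_\ell$, and near both ends simultaneously, again exploiting $\deg(v_0),\deg(v_\ell)\ge k+1$ together with the ``no two consecutive forced indices'' phenomenon, and to push the resulting rigidity until either a bad $P_{2k+1}$ appears or $\abs{E(G)}<kn$ follows. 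Granting this, the cases combine to give $\abs{E(G)}\le kn$, and equality — by the reductions above — forces $G$ to be a disjoint union of copies of $K_{2k+1}$, whence $2k+1\mid n$.
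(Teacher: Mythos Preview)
Your reductions are clean and in fact slightly sharper than the paper's: you push the minimum degree up to $k+1$ (rather than $k$) by handling the equality sub-case $\deg(v)=k$ directly, and your treatment of the borderline $\ell=2k+1$ (i.e.\ $n=2k+2$) via rotations is correct and pleasant. The gap is exactly where you flag it: for longest path length $\ell\ge 2k+2$ you only offer a plan (``combine the periodicity $\phi(v_i)=\phi(v_{i+2k+1})$ with rotations at both ends and push rigidity'') and then write ``Granting this''. As stated, this is not a proof. Rotations at $v_0$ control the colors of $v_{a-1}$ for $a$ with $v_0\sim v_a$, and periodicity ties together indices $2k+1$ apart, but without a further structural handle these constraints live on a sparse and irregular set of indices along a path of unknown length; it is not clear how you intend to force either a bad $P_{2k+1}$ or an edge bound from that alone, and you have not supplied the argument.

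The paper takes a genuinely different route after the same opening reductions (it reduces only to $\delta(G)\ge k$). First, it disposes of graphs with no cycle of length $\ge 2k+1$ in one line via the Erd\H{o}s--Gallai cycle theorem, which already gives $\abs{E(G)}\le k(n-1)<kn$. Then, rather than a longest path, it takes the \emph{shortest} cycle $C$ of length $\ell\ge 2k+1$. The minimality of $\ell$ is the workhorse: it forbids chords that would shortcut $C$ to a cycle of length in $[2k+1,\ell-1]$, so vertices of $C$ have very few neighbours on $C$, and combined with $\delta\ge k$ this forces ``outgoing'' edges, which in turn impose the relation $\phi(v_{i-1})=\phi(v_{i+1})$ at outgoing $v_i$. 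A short case analysis on $\ell$ ($\ell=2k+1$, $2k+2$, $2k+3$, and $\ell\ge 2k+4$) then finishes. If you want to salvage your longest-path approach, you would need a substitute for this minimality leverage; otherwise, switching to the shortest long cycle is what makes the endgame tractable.
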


\begin{proof}[Proof of Theorem \ref{main2}]
By induction on the number of vertices, we may assume that $G$ is connected and has minimum degree $\delta(G) \ge k$. Indeed, if $\delta(v)<k$ then 
\begin{displaymath}
e(G)=e(G-v)+\delta(v) \le k(n-1)+k-1 < kn.
\end{displaymath}
If $G$ is $C_{\ell}$-free for all $\ell \ge 2k+1$, then by Theorem \ref{EGcycle} we have
\begin{displaymath}
\abs{E(G)} \le \frac{(n-1)2k}{2} < kn.  
\end{displaymath}
Thus, assume there is a cycle of length at least $2k+1$, and let $C$ be the smallest such cycle with length $\ell$. Let the vertices of $C$  be $v_0,v_1,v_2,\dots,v_{\ell-1},v_0$, consecutively. Addition and subtraction in subscripts will always be taken modulo $\ell$.  We say that an edge $e$ is \emph{outgoing} if it has one vertex in $V(C)$ and the other in $V(G) \setminus V(C)$. We say a vertex $v \in V(C)$ is outgoing if it is contained in an outgoing edge.

\noindent\begin{bf} Case 1. \end{bf}    Suppose $\ell \ge 2k+4$.  Since we have chosen $\ell$ to be the length of the smallest $C_\ell$ with $\ell \ge 2k+1$, we have that $v_0$ cannot be adjacent to any of $v_2,v_3,\dots,v_{\ell-2k}$ nor any of $v_{2k},v_{2k+1},\dots,v_{\ell-2}$, for otherwise we would have a shorter cycle of length at least $2k+1$.  Also note that $v_0$ is adjacent to $v_1$ and $v_{\ell-1}$.  

Observe that $v_0$ cannot have two consecutive neighbors in the $\ell$-cycle.  Indeed, if $v_i$ and $v_{i+1}$ are neighbors of $v_0$, then we have the following  $(2k+1)$-paths starting at $v_1$: $v_1,v_2,\dots,v_{2k+1},v_{2k+2}$ and $v_1,v_2,\dots,v_i,v_0,v_{i+1},v_{i+2},\dots,v_{2k},v_{2k+1}$.  Thus, $v_{2k+1}$ and $v_{2k+2}$ would have to be the same color, but this is impossible since they are neighbors.

If $v_0$ has a neighbor outside of $C$, say $u_0$, then we have two paths of length $2k+1$: $u_0,v_0,v_1,\dots,v_{2k}$ and $v_{2k},v_{2k-1},\dots,v_0,v_{\ell-1}$. It follows that $u_0$ and $v_{\ell-1}$ have the same color. Similarly, $u_0$ and $v_1$ have the same color. Thus, $v_{\ell-1}$ and $v_1$ also have the same color, and similarly, for every $i$ such that $v_i$ is outgoing, we can conclude $v_{i-1}$ and $v_{i+1}$ have the same color.

If $\ell=2k+4$ and there is an outgoing vertex, say $v_0$, then $v_1$ and $v_{2k+3}$ have the same color (from the previous paragraph), a contradiction since $v_1$ and $v_{2k+2}$ also have the same color (they are endpoints of a length $2k+1$ path along the cycle $C$). 
If there is no outgoing vertex in $V(C)$, then $C$ uses all vertices of the graph. Since no vertex of the cycle has two consecutive neighbors, it follows that each degree is bounded by $2+\lceil \frac{2k-5}{2}\rceil \leq k$ and so the number of edges is at most $\frac{(2k+4)k}{2} = \frac{nk}{2}< nk.$

If $\ell \geq 4k+5$, we will show that $v_0$ has an outgoing edge from the $\ell$-cycle $C$.  Suppose not, then since $v_0$ does not have consecutive neighbors, it follows that $v_0$ has at most
\begin{displaymath}
2 + \ceil{ \frac{2k- (\ell - 2k+1)}{2} } \le k-1
\end{displaymath}
neighbors, a contradiction.  Thus, $v_0$ and similarly every other $v_i$ has an outgoing neighbor, and it follows that for every $i$, $v_i$ and $v_{i+2}$ have the same color. Hence $v_0$ and $v_{2k}$ have the same color, contradicting that $v_0$ and $v_{2k+1}$ have the same color, since they are endpoints of a $P_{2k+1}$.

\noindent\begin{bf} Case 2. \end{bf} Suppose $\ell = 2k+3$. For all $0 \le i \le \ell-1$, $v_{i+2},v_{i+1},\dots,v_{\ell-1},v_0,\dots,v_i$ is a path of length $2\ell+1$, and so $v_i$ and $v_{i+2}$ have the same color.  Thus, $v_0$ and $v_{2k+2}$ have the same color, but they are adjacent, contradiction.

\noindent\begin{bf} Case 3. \end{bf} Suppose that $\ell = 2k+2$.  This is impossible since $v_0,v_1,\dots,v_{2k+1}$ is a path of length $2k+1$ but $v_0$ and $v_{2k+1}$ are adjacent, contradiction. 

\noindent\begin{bf} Case 4. \end{bf} Finally, suppose $\ell = 2k+1$.    If no edge is outgoing, then we are done, since by connectivity the total number of edges in the graph is at most $\binom{2k+1}{2} = k n$. If indeed the total number of edges is $kn$, then $G$ is a clique.  This is the only case when equality holds. From here on, we will assume there is an outgoing edge.  


Observe that if $u$ is not a vertex of $C$, then $u$ cannot have two consecutive neighbors in $C$, for otherwise we would have a  cycle of length $2k+2$, and we are done as in Case 3.   Moreover,  $u$ cannot be connected to $v_i$ and $v_{i+3}$, since there would be two paths of length $2k+1$ from $u$ to $v_{i+1}$ and $v_{i+2}$. It follows that $u$ can have at most $k-1$ neighbors in $C$ and, thus, must have a neighbor outside $C$.

If there are some two consecutive outgoing vertices in $C$, then we may take two such vertices $v_i$ and $v_{i+1}$ so that the next vertex $v_{i+2}$ is outgoing.  Suppose $\{v_{i+2},u_{i+2}\}$ is an outgoing edge. By the previous observation, there is an edge $\{u_{i+2},w_{i+2}\}$ where $w_{i+2}$ does not belong to the cycle.  It is easy to see that $v_{i}$ and also $v_{i+1}$ cannot have two consecutive neighbors from $C$, and so each has degree $k$. By removing these two vertices, we remove $2k-1$ edges, and by the induction hypothesis the resulting graph has at most $k(n-2)$ edges. So $e(G)<kn$.

For every $i$, either $v_{i+1}$ or $v_{i+2}$ is an outgoing vertex. Hence the vertex $v_{i}$ has either the same color  as $v_{i+2}$, if $v_{i+1}$ is an outgoing vertex, or the same color as $v_{i+4}$, if $v_{i+2}$ is an outgoing vertex. Hence by repeatedly applying this argument we obtain that $v_0$ has the same color as $v_{2k}$ or $v_1$, contradiction. \qedhere



\end{proof}

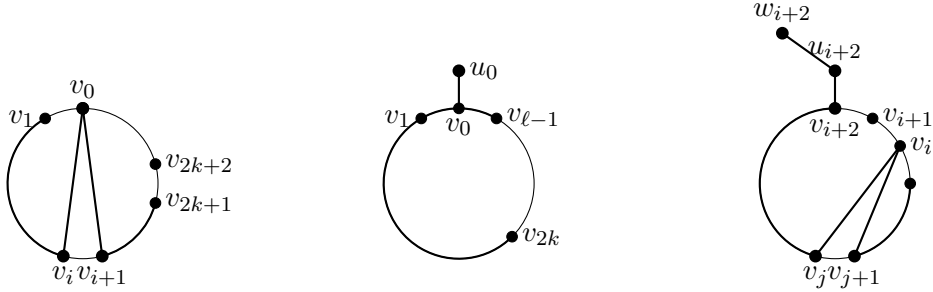
\begin{figure}
\begin{center}

\begin{tikzpicture}

\filldraw[thick] (0,1) circle (2pt) node[align=center,above]{$v_0$}  -- (xyz polar cs:angle=255,radius=1) node[align=center,below]{$v_i$} circle (2pt);
\filldraw[thick] (0,1) circle (2pt)   -- (xyz polar cs:angle=285,radius=1) node[align=center,below]{$v_{i+1}$} circle (2pt);
\filldraw (xyz polar cs:angle=120,radius=1) circle(2pt)node[left]{$v_1$};

 \filldraw  (xyz polar cs:angle=15,radius=1)node[right]{$v_{2k+2}$} circle (2pt);
\draw (1,0) arc (0:360:1cm and 1cm);

\draw[thick] (xyz polar cs:angle=120,radius=1) arc (120:255: 1cm and 1cm); 

\draw[thick] (xyz polar cs:angle=285,radius=1) arc (285:345: 1cm and 1cm);

\filldraw (xyz polar cs:angle=345,radius=1) circle (2pt) node[right]{$v_{2k+1}$} ;


\filldraw (5,1) circle (2pt) node[align=center,below]{$v_0$}  ;
\filldraw[thick] (5,1.5) node[right]{$u_0$}circle (2pt) -- (5,1);

\filldraw  (5,0)+ (xyz polar cs:angle=120,radius=1) circle(2pt)node[left]{$v_1$};

 \filldraw  (5,0)+ (xyz polar cs:angle=60,radius=1)node[right]{$v_{\ell-1}$} circle (2pt);
\draw (6,0) arc (0:360:1cm and 1cm);

\draw[thick] (5,0)+(xyz polar cs:angle=60,radius=1) arc (60:315: 1cm and 1cm);

\filldraw (5,0)+ (xyz polar cs:angle=315,radius=1) circle (2pt) node[right]{$v_{2k}$} ;

\begin{scope}[shift={(10,0)}]

\draw (1,0) arc(0:360: 1cm and 1cm);

\filldraw[thick] (0,1) circle (2pt) node[align=center,below]{$v_{i+2}$} -- (0,1.5) circle (2pt)node[above]{$u_{i+2}$};

\filldraw[thick] (0,1.5) -- (-.7,2) node[align=center,above]{$w_{i+2}$} circle (2pt);
  
\filldraw (xyz polar cs:angle=60,radius=1) circle(2pt)node[right]{$v_{i+1}$};
\filldraw (xyz polar cs:angle=30,radius=1) circle(2pt)node[right]{$v_i$};

\draw[thick] (xyz polar cs:angle=90,radius=1) arc (90:255: 1cm and 1cm); 

\draw[thick] (xyz polar cs:angle=285,radius=1) arc (285:360: 1cm and 1cm) ; 

\filldraw (xyz polar cs:angle=360,radius=1) circle (2pt); 

\filldraw[thick] (xyz polar cs:angle=30,radius=1) -- (xyz polar cs:angle=255,radius=1) node[align=center,below]{$v_j$} circle (2pt);
\filldraw[thick] (xyz polar cs:angle=30,radius=1) -- (xyz polar cs:angle=285,radius=1) node[align=center,below]{$v_{j+1}$} circle (2pt);

\end{scope}
\end{tikzpicture}

\caption{The picture on the left shows Case~1, and the other pictures show Case~4.}
\end{center}
\end{figure}


We believe that an analogue of Theorem \ref{main} should hold in the setting of trees.  Recall that the extremal number $\ex(n,H)$ of a graph $H$ is defined to be the largest number of edges an $n$-vertex graph may have if it does not contain $H$ as a subgraph.  Erd\H{o}s and S\'os made the following famous conjecture about the extremal number of trees.

\begin{conjecture}[Erd\H{o}s-S\'os \cite{erd1964extremal}]
\label{es}
Let $T$ be a tree with $k$ edges, then $\ex(n,T) \le \frac{(k-1)n}{2}$.
\end{conjecture}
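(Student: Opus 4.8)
\medskip
\noindent\textbf{A proof proposal.} Conjecture~\ref{es} is the Erd\H{o}s--S\'os conjecture, which has been open for decades; I do not settle it here, and what follows is only an outline of the line of attack I would take and of the point at which it breaks down.

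The plan is a double induction: on the number of edges $k$ of $T$ and on the order $n$ of the host graph $G$. First I would reduce, exactly as in the proof of Theorem~\ref{main2}, to connected graphs of minimum degree at least $\ceil{k/2}$: if $\deg(v)\le\frac{k-1}{2}$, then $\abs{E(G)}=\abs{E(G-v)}+\deg(v)\le\frac{(k-1)(n-1)}{2}+\frac{k-1}{2}=\frac{(k-1)n}{2}$ by induction on $n$, so $G$ already satisfies the bound; and disconnected graphs are handled componentwise. Hence I may assume $G$ is connected, $\delta(G)\ge\ceil{k/2}$, and $\abs{E(G)}>\frac{(k-1)n}{2}$ (equivalently, average degree larger than $k-1$), and the task is to locate a copy of $T$ in $G$.

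To embed $T$, I would peel its leaves one at a time, obtaining a chain $T=T_k\supset T_{k-1}\supset\cdots\supset T_0$ with $T_0$ a single vertex and $T_{i-1}=T_i$ minus a leaf, and then build an embedding greedily, sending each reintroduced leaf to an as-yet-unused neighbour of the image of its parent. This succeeds automatically once $\delta(G)\ge k$, since at most $k$ vertices are ever occupied; but the reduction above only yields $\delta(G)\ge\ceil{k/2}$, and the difficulty is precisely this factor-of-two gap between an \emph{average}-degree hypothesis and the \emph{local} room needed to extend a partial embedding of an \emph{arbitrary} tree --- the very kind of gap that Theorem~\ref{main2} closed for the colored path problem. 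The classical devices for narrowing it, which I would try to adapt, are (i) first passing to a subgraph of minimum degree exceeding half the average degree and then peeling the leaves of $T$ in an order rooted at a centroid, so that every partial subtree stays small, and (ii) using the connectivity or expansion of $G$ to reroute a short piece of the embedding whenever the greedy step stalls. I would also want to decompose a general $T$ along shapes for which the conjecture is already known --- it holds for paths (this is Theorem~\ref{EGpath}), for spiders, for trees of diameter at most four, and asymptotically for all sufficiently large $k$ by Ajtai, Koml\'os, Simonovits and Szemer\'edi.

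The hardest part, and the reason the conjecture remains open, is exactly this last gap: the extremal configuration --- a disjoint union of copies of $K_k$ --- has minimum degree only $k-1$, so no purely local reduction can finish the argument, and no general mechanism is known for turning ``many edges'' into a workable embedding scheme for every tree. For the colored generalization suggested by the analogy with Theorem~\ref{main}, the programme I would propose is to first establish colored analogues of Theorem~\ref{EGpath} (carried out above) and of the other solved cases, and then to attempt the reductions above in the colored setting; we leave this as an open direction.
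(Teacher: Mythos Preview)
Your proposal is appropriate in that you correctly identify Conjecture~\ref{es} as the (open) Erd\H{o}s--S\'os conjecture and do not claim to prove it. There is nothing to compare against: the paper does not prove this statement either. It is stated as a conjecture, with only the remark that a proof for sufficiently large trees has been announced by Ajtai, Koml\'os, Simonovits and Szemer\'edi~\cite{erdossosconj}; the paper then uses the conjecture solely as motivation for formulating Conjecture~\ref{es2}. Your outline of the standard reductions and of where they stall is accurate commentary, but it goes well beyond anything the paper itself attempts for this statement.
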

A proof of Conjecture \ref{es} for sufficiently large trees has been announced by Ajtai,  Koml\'os, Simonovits and Szemer\'edi \cite{erdossosconj}.

We introduce a new variation of the extremal function $\ex(n,T)$ in the case of trees.  Let $\ex^c(n,T)$ denote the maximimum number of edges possible in an $n$-vertex graph $G$ with a proper vertex coloring (using any number of edges), such that in every copy of $T$ in $G$ the leaves of $T$ are all the same color.

\begin{theorem}
Let $T$ be a tree with $k$ edges such that in the (unique) proper vertex $2$-coloring of $T$ all leaves are not the same color,  then $\ex^c(n,T) \le (k-1)n$.
\end{theorem}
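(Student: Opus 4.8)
The plan is to induct on $n$: first reduce to the case $\delta(G)\ge k$ (deleting a vertex of small degree otherwise), and then show that a properly coloured graph of minimum degree at least $k$ must contain a copy of $T$ whose leaves are not all the same colour. This contradicts the hypothesis, so the case $\delta(G)\ge k$ never occurs under the hypothesis, and the small-minimum-degree reduction finishes the proof.

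\emph{Setup and reduction.} We induct on $n$. We may assume $G$ is connected, since otherwise we apply the induction hypothesis to each component and add the bounds. If $G$ has a vertex $v$ with $d(v)\le k-1$, then $G-v$ still carries a proper vertex colouring and still has the property that every copy of $T$ in it has monochromatic leaves, so the induction hypothesis gives
\begin{displaymath}
e(G)=e(G-v)+d(v)\le (k-1)(n-1)+(k-1)=(k-1)n,
\end{displaymath}
and we are done. (No separate base case is needed: if $\delta(G)\ge k$ then $n\ge k+1$, so every graph to which neither of the above two reductions applies falls under the case treated next.) Hence we may assume $G$ is connected with $n\ge k+1$ and $\delta(G)\ge k$, and it remains to find a copy of $T$ in $G$ whose leaves receive at least two colours.

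\emph{A flexible embedding.} This is where the hypothesis on $T$ is used: the proper $2$-colouring of $T$ has a leaf $a$ in one class and a leaf $b$ in the other, and for $k\ge 2$ these are non-adjacent, so $T-\{a,b\}$ is a tree on $k-1$ vertices. Since $\delta(G)\ge k$ one embeds $T-\{a,b\}$ greedily, placing its vertices in an order in which each comes after its parent: at every step at most $k-2$ vertices have been used, while the image of the parent has at least $k$ neighbours, so a free neighbour is available. Writing $S$ for the set of $k-1$ images and $y_a,y_b$ for the images of the $T$-neighbours of $a$ and of $b$, each of $y_a,y_b$ has at least $k-(k-2)=2$ neighbours outside $S$. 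One then places $a$ and $b$ on such outside-neighbours; a short case analysis (using that $G$ is properly coloured) shows that the images of $a$ and $b$ can be chosen with two distinct colours --- giving a bad copy of $T$ --- \emph{unless} the outside-neighbourhoods of $y_a$ and of $y_b$ are both monochromatic of one and the same colour.

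\emph{The degenerate case --- the main obstacle.} Excluding this last configuration is exactly the difficulty Gy\H{o}ri and Lemons had to overcome for paths, and it cannot be avoided: when $T=P_k$ with $k$ odd, the hypothesis is precisely ``$G$ has no $P_k$ with endpoints of different colours'' and the conclusion $e(G)\le (k-1)n$ is Theorem~\ref{main} with $2k+1$ there replaced by $k$. Here I would abandon the black-box embedding and use a longest path $P=u_0u_1\cdots u_m$ of $G$ as a backbone; since $\delta(G)\ge k$ we have $m\ge k$, and all neighbours of $u_0$ (resp.\ of $u_m$) lie on $P$. Playing the chords at $u_0$ and at $u_m$ against rotations of $P$ should yield a dichotomy: either some rotation produces a short path between two adjacent --- hence differently coloured --- vertices, which is already a bad copy when $T$ is a path and is the seed of a bad copy of $T$ in general, or every such path has monochromatic ends, in which case the colouring along $P$ is forced to be periodic, and that periodicity is incompatible with the presence of the chords. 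Converting this into enough colour diversity among the neighbours of the endpoints of $P$ to re-run the embedding of the previous paragraph with bichromatic leaves is the technical heart of the argument, and I expect essentially all the work to lie here; the slack between the target $(k-1)n$ and the sharp Erd\H{o}s--S\'os value $\frac{k-1}{2}n$ is precisely what gives room for these rotations to succeed.
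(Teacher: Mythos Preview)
Your reduction to $\delta(G)\ge k$ is fine and matches the paper, but the proof stops short of a proof: the ``degenerate case'' is precisely the content of the theorem, and you leave it as a programme (``should yield a dichotomy'', ``I expect essentially all the work to lie here''). As written, nothing after the flexible embedding is established.

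The missing idea is that you do not need to redo the path argument at all. Since $T$ has leaves $a,b$ in different bipartition classes, the unique $a$--$b$ path in $T$ has odd length $2\ell+1\le k$. In your subgraph $G'$ with $\delta(G')\ge k$ one has $|E(G')|>(k-1)|V(G')|\ge 2\ell\,|V(G')|$, so Theorem~\ref{main} (the Gy\H{o}ri--Lemons bound, applied with $\ell$ in place of $k$) already yields a copy of $P_{2\ell+1}$ with endpoints of different colours. Now extend this path greedily to a copy of $T$: at each step at most $k$ vertices have been used and the current attachment vertex has degree at least $k$, so a free neighbour exists. The two endpoints of the path are leaves of $T$ of different colours. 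This is exactly the paper's proof; your attempt to embed $T-\{a,b\}$ first and then hunt for bichromatic leaf images reverses the order of operations and lands you in the very obstruction that Theorem~\ref{main} was designed to overcome.
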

\begin{proof}
 There is a path of odd length in $T$ with endpoints which are leaves. Let $G$ be an $n$-vertex graph with more than $(k-1)n$ edges with a proper vertex coloring.  We may find a subgraph $G'$ of $G$ with average degree at least that of $G$ and minimum degree greater than $k-1$.  The proper coloring of  $G$ induces a proper coloring of $G'$ and so applying Theorem \ref{main} we may find a copy of $P_{2 \ell+1}$ in $G'$ with endpoints of distinct colors. We may now build up the rest of the tree in a greedy fashion as every degree in $G'$ is at least $k$ and $T$ has $k+1$ vertices. Thus, we have found a copy of $T$ in the graph $G$ with leaves of at least two colors.
\end{proof}

\begin{theorem}
\label{trees}
Let $T$ be a tree with $k$ edges such that in the proper vertex $2$-coloring of $T$ all leaves are the same color, then $\ex^c(n,T) = \floor{\frac{n^2}{4}}$, provided $n$ is sufficiently large.
\end{theorem}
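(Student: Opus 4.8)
The plan is to prove the two matching inequalities $\ex^c(n,T)\ge\floor{n^2/4}$ and $\ex^c(n,T)\le\floor{n^2/4}$. For the lower bound I would take $G=K_{\floor{n/2},\ceil{n/2}}$ and properly $2$-color it by its two sides $A$ and $B$. Since $T$ is connected and bipartite, any embedding of $T$ into $G$ carries the two color classes of $T$ (the classes of its unique proper $2$-coloring) onto $A$ and $B$; because all leaves of $T$ lie in a single color class of $T$, every copy of $T$ in $G$ has all of its leaves inside $A$, or all inside $B$, and hence monochromatic. So $G$ has the required property and $\ex^c(n,T)\ge\abs{E(G)}=\floor{n^2/4}$ for every $n$.

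For the upper bound, first observe that $T\neq K_2$: in the proper $2$-coloring of $K_2$ the two vertices (both leaves) receive different colors, so $K_2$ violates the hypothesis; in particular $T$ has at least two leaves. Fix leaves $u\neq v$ of $T$; since all leaves of $T$ share a color, $d_T(u,v)$ is even, say $d_T(u,v)=2s$ with $1\le s\le k/2$. Now let $G$ be an $n$-vertex properly colored graph with $\abs{E(G)}>\floor{n^2/4}$ and $n$ large. The idea is to pass to a subgraph $G'$ that is simultaneously dense and of large minimum degree: repeatedly delete vertices of degree less than $n/5$; since fewer than $n\cdot(n/5)=n^2/5<\abs{E(G)}$ edges are destroyed in total, this process cannot exhaust $G$ and stops at a nonempty $G'$ with $\delta(G')\ge n/5$, while a short computation comparing $\floor{n^2/4}$, $\floor{\abs{V(G')}^2/4}$ and the number of deleted edges shows $G'$ still has more than $\floor{\abs{V(G')}^2/4}$ edges (so $\abs{V(G')}\ge n/5+1$ is also large). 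By the classical fact that $\ex(m,C_{2s+1})=\floor{m^2/4}$ once $m$ is sufficiently large, $G'$ contains a copy of $C_{2s+1}$, say $y_0y_1\cdots y_{2s}y_0$; the subpath $y_0y_1\cdots y_{2s}$ has length $2s$, and since $y_0y_{2s}\in E(G')$ and the coloring is proper, $y_0$ and $y_{2s}$ have different colors.

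It remains to grow this path into a copy of $T$. Map the length-$2s$ path in $T$ joining $u$ to $v$ onto $y_0y_1\cdots y_{2s}$ with $u\mapsto y_0$ and $v\mapsto y_{2s}$, and then embed the remaining $k-2s$ vertices of $T$ one at a time, each time choosing a not-yet-embedded vertex of $T$ adjacent in $T$ to an already-embedded one: if its embedded neighbor has image $z$, then $z$ has at least $\delta(G')\ge n/5>k$ neighbors in $G'$, of which at most $k-1$ are already used, so a fresh image is available. This yields a copy of $T$ in $G$ whose leaves $u$ and $v$ have different colors, so not all of its leaves are the same color; this contradicts the assumed property of $G$. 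Hence $\ex^c(n,T)\le\floor{n^2/4}$ for $n$ large, and together with the lower bound the theorem follows.

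I expect the main obstacle to be the reduction to the single subgraph $G'$: the greedy extension needs large minimum degree, whereas forcing the short odd cycle $C_{2s+1}$ needs the edge count to stay above the Tur\'an threshold $\floor{\,\cdot\,^2/4}$, so one has to verify carefully that deleting all low-degree vertices never pushes the number of edges below $\floor{\abs{V(G')}^2/4}$. The other ingredients — the bipartite lower-bound construction, the classical value $\ex(m,C_{2s+1})=\floor{m^2/4}$ for large $m$ (with $K_{\floor{m/2},\ceil{m/2}}$ extremal), and greedy embedding of a fixed tree into a graph of large minimum degree — are routine.
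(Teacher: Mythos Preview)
Your proof is correct, and the ``short computation'' you flag does go through: deleting $d\ge 1$ vertices each of degree $<n/5$ removes at most $dn/5$ edges, while $\floor{n^2/4}-\floor{(n-d)^2/4}\ge \bigl(d(2n-d)-1\bigr)/4$, and one checks $\bigl(d(2n-d)-1\bigr)/4 + 1 > dn/5$ whenever $1\le d\le n$, so $|E(G')|>\floor{|V(G')|^2/4}$ survives. The route, however, is genuinely different from the paper's. The paper adds an edge $e$ between two leaves of $T$; the resulting graph $T+e$ contains an odd cycle and becomes bipartite upon deleting $e$, so it is $3$-chromatic and edge-critical. A single appeal to Simonovits's theorem then gives $\ex(n,T+e)=\floor{n^2/4}$ for large $n$, and any graph exceeding this bound contains a copy of $T$ in which two leaves are adjacent --- hence differently colored in any proper coloring --- with no need for a minimum-degree reduction or a greedy extension step. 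Your argument trades this single black box for more explicit work: it invokes only the odd-cycle case $\ex(m,C_{2s+1})=\floor{m^2/4}$ (itself the simplest instance of Simonovits), but must first manufacture a high-minimum-degree subgraph and then grow the rest of $T$ by hand. The paper's proof is shorter; yours is more self-contained in that it uses Simonovits only for a cycle rather than for the graph $T+e$.
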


\begin{proof}
The fact that all leaves are colored the same by a $2$-coloring implies that all paths between a pair of leaves have even length.  We add an edge $e$ to $T$ connecting an arbitrary pair of leaves, and let $G$ be the resulting graph.  Since $G$ has an odd cycle, its chromatic number is clearly 3, and the deletion of $e$ yields a 2-chromatic graph (so $G$ is edge-critical). It follows from a theorem of Simonovits \cite{simonovits1968method} that if $n$ is sufficiently large, the extremal number of $G$ is precisely $\ex(n,G) =  \floor{\frac{n^2}{4}}$.  Thus, in any $n$-vertex graph with more than $\floor{\frac{n^2}{4}}$ edges we have a copy $T$ with two adjacent leaves, and so in any proper coloring of this graph we have a copy of $T$ with leaves of at least 2 colors.  It follows that $\ex^c(n,T) \le \floor{\frac{n^2}{4}}$, and this bound is realized by the complete bipartite graph $K_{\floor{\frac{n}{2}},\ceil{\frac{n}{2}}}$.
\end{proof}

\begin{remark}
The paths of even length $P_{2k}$ are a special case of Theorem  \ref{trees}. Here better bounds on $n$ are known to exist.  For example, the result of F\"uredi  \cite{furedi2015extremal} on the extremal number of odd cycles implies that $n \ge 4k$ is sufficient.
\end{remark}

We believe that a strengthening of Conjecture \ref{es} should hold for trees whose 2-coloring yields two leaves of different colors.  

\begin{conjecture}
\label{es2}
Let $T$ be a tree with $k$ edges such that in the proper vertex $2$-coloring of $T$ all leaves are not the same color,  then $\ex^c(n,T) \le \frac{(k-1)n}{2}$.
\end{conjecture}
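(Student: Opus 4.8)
The natural plan is to repeat the argument used for the preceding theorem, but now feed in the sharp Theorem~\ref{main2} in place of Theorem~\ref{main}. Suppose $G$ is an $n$-vertex graph carrying a proper vertex coloring with $\abs{E(G)} > \frac{(k-1)n}{2}$; we must exhibit a copy of $T$ whose leaves are not all the same color. Since the average degree of $G$ exceeds $k-1$, we may pass to a subgraph $G'$ whose average degree still exceeds $k-1$ and whose minimum degree exceeds $\frac{k-1}{2}$ (take a subgraph maximizing $\abs{E(H)}-\frac{k-1}{2}\abs{V(H)}$ and, among those, one with the fewest vertices); the coloring of $G$ restricts to a proper coloring of $G'$. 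Because the proper $2$-coloring of $T$ gives two of its leaves different colors, there is a path in $T$ between two leaves of different colors, and it has odd length, say $2\ell+1$; as this path uses at most $k$ edges we get $\ell\le\frac{k-1}{2}$. Hence $\abs{E(G')} > \frac{k-1}{2}\abs{V(G')}\ge \ell\abs{V(G')}$, so by Theorem~\ref{main2} (with $\ell$ in the role of $k$) the graph $G'$ contains a copy of $P_{2\ell+1}$ whose endpoints receive different colors. This path is the intended leaf-to-leaf path of the sought copy of $T$.

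\textbf{The main obstacle.} It remains to grow the $k-(2\ell+1)$ further edges of $T$ that hang off this path, inside $G'$. A naive greedy extension requires minimum degree at least $k$, since $T$ has $k+1$ vertices, whereas $G'$ only guarantees minimum degree roughly $k/2$. Embedding an arbitrary $k$-edge tree into a host of average degree $k-1$ is precisely the Erd\H{o}s--S\'os conjecture, and in fact $\ex^c(n,T)\ge\ex(n,T)$ — a graph containing no copy of $T$ at all satisfies the colored requirement vacuously and every graph admits a proper coloring — so Conjecture~\ref{es2} is formally stronger than the Erd\H{o}s--S\'os conjecture for the trees in question; it is also sharp, since disjoint copies of $K_k$ realize the bound ($K_k$ is too small to contain the $(k+1)$-vertex tree $T$). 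Consequently any complete proof must carry the full weight of an Erd\H{o}s--S\'os argument, and for $n$ large one would hope to invoke the announced theorem of Ajtai, Koml\'os, Simonovits and Szemer\'edi. The genuinely new difficulty is that the embedding has to be \emph{anchored}: a designated sub-path of $T$ must be placed onto the differently-colored $P_{2\ell+1}$ produced above, and the whole embedding must respect the proper coloring. Showing that the regularity-based embedding can be threaded through a prescribed path while keeping track of the $2$-coloring is where the real work lies.

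\textbf{Intermediate targets.} A sensible program is to settle the conjecture first for tree families where Erd\H{o}s--S\'os is known unconditionally and where colors can be controlled directly. The case $T=P_{2\ell+1}$ is already Theorem~\ref{main2}; spiders, brooms, and trees of bounded diameter are natural next steps, adapting the known ad hoc embeddings so that one leg (respectively the handle) is pinned to a differently-colored path coming from Theorem~\ref{main2}. It may also help to extract from Theorem~\ref{main2} more than a single path — for instance a differently-colored $P_{2\ell+1}$ all of whose internal vertices still have large degree in $G'$ — which would make the subsequent greedy completion far more robust and might even let one bypass the heavier machinery altogether.
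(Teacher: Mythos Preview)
This statement is labeled a \emph{conjecture} in the paper, and the paper offers no proof of it; immediately after stating it the authors write that one would hope it follows from the Erd\H{o}s--S\'os conjecture ``but unfortunately this does not seem to be the case,'' and they then prove only the special case of double stars (Theorem~\ref{ds}). So there is no proof in the paper to compare against.

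Your write-up is not a proof either, and you are transparent about that: you sketch the natural attack (use Theorem~\ref{main2} to locate a differently-colored odd path, then try to grow the rest of $T$), correctly isolate the obstruction (greedy completion needs minimum degree~$k$, you only have about $k/2$), and observe that $\ex^c(n,T)\ge \ex(n,T)$, so the conjecture is at least as strong as Erd\H{o}s--S\'os for these trees. That last point is exactly why the authors call it a conjecture and not a theorem. Your proposed intermediate targets (spiders, brooms, bounded-diameter trees) are reasonable and in the same spirit as the paper's double-star result, though the paper does not pursue them. In short: your assessment of the difficulty is accurate and consistent with the paper, but neither you nor the paper proves the statement.
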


One would hope that Conjecture~\ref{es2} could be deduced directly from Conjecture~\ref{es}, but unfortunately this does not seem to be the case. We take a first step towards Conjecture~\ref{es2} by proving it in the case of double stars.

\begin{figure}
\begin{center}
\begin{tikzpicture}

	\draw  (1,0) circle  (2pt)  node[align=center, above]{$u$} -- (2,0) circle  (2pt)  node[align=center, above]{$v$};
\filldraw (1,0) -- (0,1) circle  (2pt);
\filldraw (1,0) -- (0,.5) circle  (2pt);
\draw (0,0)node[align=center]{$\vdots$};
\filldraw (1,0) -- (0,-0.5) circle  (2pt);
\draw (0.5,.25) arc (0:360:0.5cm and 1cm) ;

\draw (0,1.2) node[align=center, above]  {$A$};
\filldraw (2,0) -- (3,1) circle  (2pt);
\filldraw (2,0) -- (3,.5) circle  (2pt);
\filldraw (2,0) -- (3,-0.5) circle  (2pt);

\draw (3,0)node[align=center]{$\vdots$};
\draw (3,1.2) node[align=center, above]  {$B$};
\draw (3.5,0.25) arc (0:360:0.5cm and 1cm);

\begin{scope}[shift={(4,0)}]
	\draw  (1,0) circle  (2pt)  node[align=center, above]{$u$} -- (2,0) circle  (2pt)  node[align=center, above]{$v$};
\filldraw (2,0) -- (3,1) circle  (2pt);
\filldraw (2,0) -- (3,.65) circle  (2pt);
\filldraw (2,0) -- (3,-0.5) circle  (2pt);
\draw (3,0)node[align=center]{$\vdots$};
\draw (3,1.2) node[align=center, above]  {$B$};
\draw (3.5,0.25) arc (0:360:0.5cm and 1cm);
\filldraw  (2,0) -- (3,0.2) node[align=center, above]{$w$} circle (2pt);

\draw (3,0.2) -- (4.5,1) circle  (2pt);
\draw (3,0.2) -- (4.5,.65) circle  (2pt);
\draw (3,0.2) -- (4.5,0.2) circle  (2pt);
\draw (3,0.2) -- (4.5,-0.5) circle  (2pt);
\draw (4.5,0)node[align=center]{$\vdots$};
\draw (4.5,1.2) node[align=center, above]  {$C$};
\draw (5,0.25) arc (0:360:0.5cm and 1cm);

\end{scope}

\begin{scope}[shift={(10,0)}]

	\draw  (1,0) circle  (2pt)  node[align=center, above]{$u$} -- (3,0.2) node[align=center, above]{$x$}  ;
\filldraw (1,0) -- (3,1) circle  (2pt);
\filldraw (1,0) -- (3,.65) circle  (2pt);
\filldraw (1,0) -- (3,-0.5) circle  (2pt);
\draw (3,0)node[align=center]{$\vdots$};
\draw (3,1.2) node[align=center, above]  {$B$};
\draw (3.5,0.25) arc (0:360:0.5cm and 1cm);

\filldraw (3,0.2) circle (2pt);

\draw (3,0.2) -- (4.5,1) circle  (2pt);
\draw (3,0.2) -- (4.5,.65) circle  (2pt);
\draw (3,0.2) -- (4.5,0.2) circle  (2pt);
\draw (3,0.2) -- (4.5,-0.5) circle  (2pt);
\draw (4.5,0)node[align=center]{$\vdots$};
\draw (4.5,1.2) node[align=center, above]  {$C$};
\draw (5,0.25) arc (0:360:0.5cm and 1cm);

\filldraw (1,0) -- (0,1) node[align=center, above]{$y$}   circle  (2pt);

\end{scope}

\end{tikzpicture}

\end{center}
\caption{Theorem \ref{ds} and its proof.}
\label{doublestar}
\end{figure}
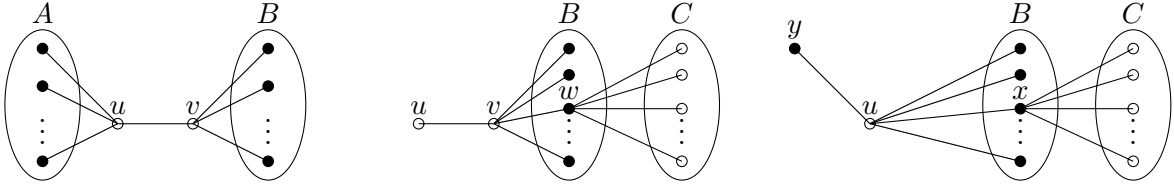

\begin{theorem}
\label{ds}
For positive integers $a$ and $b$, let $S_{a,b}$ denote the tree on $a+b+2$ vertices consisting of an edge $\{u,v\}$ where $\abs{N(u)\setminus v}=a$, $\abs{N(v)\setminus u}=b$ and $N(u) \cap N(v) = \varnothing$ (See Figure \ref{doublestar}, left).  We have $\ex^c(n,S_{a,b}) \le \frac{a+b}{2}n$.
\end{theorem}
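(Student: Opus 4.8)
The plan is to argue by induction on $n$, reduce to a dense connected graph of large minimum degree, extract from the colouring a rigid local structure, and finally dispose of a single boundary case by hand. Since $S_{a,b}\cong S_{b,a}$ we may assume $a\le b$. If $n\le a+b+1$ the claim is immediate, as $\ex^c(n,S_{a,b})\le\binom n2\le\frac{a+b}2n$; so assume $n\ge a+b+2$ and let $G$, with its proper colouring, be a counterexample on the fewest vertices. Exactly as in the proof of Theorem~\ref{main2}, $G$ is connected, and if some vertex had degree at most $\lfloor\frac{a+b}2\rfloor$ its deletion would give a smaller counterexample; hence $\delta(G)\ge\lfloor\frac{a+b}2\rfloor+1$, which is at least $a+1$ because $a\le b$. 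Finally, $\abs{E(G)}>\frac{a+b}2n$ forces average degree greater than $a+b$, so $G$ has a vertex of degree at least $a+b+1$.

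The heart of the argument is the local statement that, in any properly coloured graph of minimum degree at least $a+1$ with no copy of $S_{a,b}$ whose leaves receive two different colours, \emph{every colour class meets every neighbourhood in at most $b$ vertices}. To see this, suppose $M\subseteq N(v)$ is monochromatic of colour $c$ with $|M|\ge b+1$, and fix $x\in M$; since $x$ itself has colour $c$ and the colouring is proper, $N(x)$ avoids colour $c$, and in particular $N(x)\cap M=\varnothing$. Now for any $a$-set $A_0\subseteq N(x)\setminus\{v\}$ (which exists as $\deg x\ge a+1$) and any $b$-set $B_0\subseteq M\setminus\{x\}$, the vertices $\{x,v\}\cup A_0\cup B_0$ span a copy of $S_{a,b}$ with centres $x,v$ and leaf set $A_0\cup B_0$, so its leaves share one colour, necessarily $c$ since $B_0\subseteq M$. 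Letting $A_0$ vary shows that all of $N(x)\setminus\{v\}$ has colour $c$, contradicting that $N(x)$ avoids colour $c$. This proves the claim.

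Apply this with $v$ of degree $d\ge a+b+1$ and an arbitrary neighbour $w$ (so $\deg w\ge a+1$): for any $a$-set $A_0\subseteq N(w)\setminus\{v\}$, running $B_0$ over all $b$-sets (possible since $|N(v)\setminus(\{w\}\cup A_0)|\ge d-1-a\ge b$) shows that $N(v)\setminus(\{w\}\cup A_0)$ is monochromatic; as it lies in $N(v)$ and has $d-1-|A_0\cap N(v)|$ vertices, the claim forces $d=a+b+1$ and $A_0\subseteq N(v)$. Letting $A_0$ vary gives $N(w)\setminus\{v\}\subseteq N(v)$, i.e.\ $N[w]\subseteq N[v]$ for every $w\in N(v)$. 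Hence $N[v]$ has no edges leaving it, and since $G$ is connected $V(G)=N[v]$, so $n=d+1=a+b+2$ and $v$ is a universal vertex.

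It remains to handle $n=a+b+2$ with a universal vertex $v$. Now $\abs{E(G)}>\frac{(a+b)(a+b+2)}2=\binom n2-\frac n2$, so the complement has fewer than $\frac{a+b+2}2$ edges, all lying in $V(G)\setminus\{v\}$; writing $D$ for the complement restricted to that $(a+b+1)$-set, we get $e(D)\le\lceil\frac{a+b}2\rceil\le b$. For any edge $\{u,u'\}$ of $G$ with $u,u'\ne v$, a copy of $S_{a,b}$ on all of $V(G)$ with centres $u,u'$ has leaf set $V(G)\setminus\{u,u'\}$, which contains the uniquely coloured vertex $v$ together with at least one further vertex, hence is automatically bichromatic; and by Hall's theorem such a copy exists, in one orientation or the other, as soon as $u$ and $u'$ have no common non-neighbour and $\min(\deg_Du,\deg_Du')\le a$ (the maximum being automatically at most $e(D)\le b$). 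Since $D$ has at most $\lceil\frac{a+b}2\rceil$ edges on $a+b+1$ vertices, either $D$ has an isolated vertex $u$, in which case any $u'\ne u$ with $\deg_Du'\le a$ works (and one exists, else $e(D)$ would be too large), or else $D$ is a perfect matching and any $u,u'$ in distinct matching edges work. In every case we obtain a copy of $S_{a,b}$ with leaves of two colours, contradicting the choice of $G$.

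The conceptual core is the local claim of the second paragraph. The step I expect to require the most care is the final boundary case $n=a+b+2$: one must verify the Hall condition together with the few extremal shapes the sparse complement can take, and throughout one must keep track of parities and ensure that the degree reductions leave enough room ($\delta(G)\ge a+1$) for all the embeddings to be carried out.
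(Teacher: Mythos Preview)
Your proof is correct but substantially different from, and more intricate than, the paper's argument. The paper's proof is very short: after the same minimum–degree reduction, it quotes the classical result $\ex(n,S_{a,b})=\frac{a+b}{2}n$ to obtain a copy of $S_{a,b}$ with centres $u,v$ and leaf sets $A,B$; if $A\cup B$ is monochromatic it is independent, and then either some $w\in B$ is a non-neighbour of $u$ (so recentring on $\{v,w\}$ and using $\deg w\ge a+1$ gives a bichromatic copy), or $B\subseteq N(u)$ (so recentring on $\{u,x\}$ for any $x\in B$ works). Two short cases and done.

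Your route avoids the citation and instead proves a clean structural lemma (each colour class meets each neighbourhood in at most $b$ vertices), which you then leverage to force $n=a+b+2$ with a universal vertex and finish by a complement/Hall analysis. This is a genuinely different argument; the local claim is a nice self-contained observation, and the whole proof does not depend on knowing $\ex(n,S_{a,b})$. The cost is length and the need to handle the boundary $n=a+b+2$ carefully (including the isolated–vertex versus perfect–matching dichotomy in the complement). A small cosmetic point: in the isolated–vertex subcase your extra restriction $\deg_D u'\le a$ is unnecessary, since once $u$ is isolated you already have $\min(\deg_D u,\deg_D u')=0\le a$, $\max\le e(D)\le b$, and $N_D(u)\cap N_D(u')=\varnothing$.
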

\begin{proof}
Let $G$ be a vertex colored graph with $\abs{E(G)} > \abs{V(G)}\frac{a+b}{2}$. Without loss of generality, suppose $a\leq b$. We may assume by induction that $\delta(G)>\frac{a+b}{2} \geq a$. Since $\ex(m,S_{a,b}) = m\frac{a+b}{2}$ (see, for example \cite{mclennan2005erdHos}), it follows that  $G$ contains a copy of $S_{a,b}$.  Suppose this copy is defined by the edge $\{u,v\}$ together with the disjoint sets $A\subseteq N(u)$, $B\subseteq N(v)$ with $|A|=a, |B|=b$. Now, if there is more than one color in $A\cup B$, then we are done. So suppose the color of all vertices in $A \cup B$ is the same. Hence $A \cup B$ is an independent set.

If $u$ is not adjacent to some $w \in B$ (See Figure \ref{doublestar}, middle),  since $\abs{N(w)} \geq a +1$,  we can pick $C \subseteq N(w)\setminus\{u,v\}$ of size $a$. So the edge $\{v,w\}$ together with the sets $B' = (B\cup\{u\})\backslash\{w\}$ and $C$  define a $S_{a,b}$ where the colors of all vertices in $C$ are different from the colors of $B'\backslash\{u\}$. 

If $u$ is adjacent to all $w \in B$, then fix $x\in B$ (See Figure \ref{doublestar}, right). Since $\abs{N(x)} \geq a +1$, we can pick $C \subseteq N(x)\setminus\{u\}$ of size $a$. Let $y \in A$ and define $B' = (B\cup\{y\})\backslash\{x\}$.  Observe that $B'\subseteq N(u)$, and  the edge $\{u,x\}$ together with the sets $B'$ and $C$ defines a $S_{a,b}$, where again the color of the vertices in $C$ is different from the color of vertices in $B'$. 
\end{proof}



\section{Acknowledgements}
The research of all three authors was supported by the National Research, Development and Innovation Office -- NKFIH under the grant K116769.

\end{document}